\newtheorem{theorem}{Theorem}[section]
\newtheorem{definition}[theorem]{Definition}
\newtheorem{remark}[theorem]{Remark}
\newenvironment{proof}[1][Proof]{\textbf{#1.} }{\hfill\rule{0.5em}{0.5em}}
{\catcode`\@=11\global\let\AddToReset=\@addtoreset
\AddToReset{equation}{section}

\AddToReset{theorem}{section}

\title{A global fractional Caccioppoli-type estimate for solutions to nonlinear elliptic problems with measure data}

\author{Minh-Phuong Tran\thanks{Applied Analysis Research Group, Faculty of Mathematics and Statistics, Ton Duc Thang University, Ho Chi Minh city, Vietnam; \texttt{tranminhphuong@tdtu.edu.vn}} , Thanh-Nhan Nguyen\thanks{Department of Mathematics, Ho Chi Minh City University of Education, Ho Chi Minh city, Vietnam; \texttt{nhannt@hcmue.edu.vn}}}

\date{\today} 

\begin{document}
\maketitle

\begin{abstract}
We prove a global fractional differentiability result via the fractional Caccioppoli-type estimate for solutions to nonlinear elliptic problems with measure data. This work is in fact inspired by the recent paper [B. Avelin, T. Kuusi, G. Mingione, {\em Nonlinear Calder\'on-Zygmund theory in the limiting case}, Arch. Rational. Mech. Anal. {\bf 227}(2018), 663--714], that was devoted to the local fractional regularity for the solutions to nonlinear elliptic equations with right-hand side measure, of type $-\mathrm{div}\, \mathcal{A}(\nabla u) = \mu$ in the limiting case. Being a contribution to recent results of identifying function classes that solutions to such problems could be defined, our aim in this work is to establish a global regularity result in a setting of weighted fractional Sobolev spaces, where the weights are powers of the distance function to the boundary of the smooth domains.

\medskip

\medskip

\noindent {\emph{2010 Mathematics Subject Classification:} 35J92; 35J62; 35B65; 35R06.}

\noindent {\emph{Keywords:} Regularity theory; fractional Sobolev spaces; Caccioppoli type inequality; quasi-linear elliptic equation; $p$-Laplace equation; measure data.} 
\end{abstract}   
  
\section{Introduction and main results}
\label{sec:intro}
In this study, we are interested in the following Dirichlet problem with measure data
\begin{equation}\label{eq:elliptictype}
\begin{cases}
-\mathrm{div} \, \mathcal{A}(x,\nabla u) &= \ \mu \qquad \text{in} \ \ \Omega, \\
\hspace{1.5cm} u &= \ 0 \qquad \text{on} \ \ \partial \Omega.
\end{cases}
\end{equation}
Here, $\Omega$ is an open bounded domain of $\mathbb{R}^n$ ($n \ge 2$); data $\mu$ is a Borel measure with finite mass in $\Omega$; the nonlinearity $\mathcal{A}$ is a Carath\'eodory vector field defined on $\Omega \times \mathbb{R}^n$ and $\mathcal{A}$ satisfies the following ellipticity and growth assumptions 
\begin{equation}\label{eq:cond-ellipgrow}
\begin{cases}
|\mathcal{A}(x,z)| + |\partial_z \mathcal{A}(x,z)|(|z|^2+\kappa^2) \le c_{\mathcal{A}}(|z|^2+\kappa^2)^{\frac{p-1}{2}},\\
c_{\mathcal{A}}^{-1}(|z|^2+\kappa^2)^{\frac{p-2}{2}}|\zeta|^2 \le \langle \partial_z \mathcal{A}(x,z)\zeta,\zeta \rangle,
\end{cases}
\end{equation}
for every $z, \zeta \in \mathbb{R}^n$ and $x \in \Omega$. Moreover, it is important to remark that in the above assumptions of $\mathcal{A}$ as follows: we here only consider $p>2-\frac{1}{n}$, and $c_{\mathcal{A}}>1$ is the given ellipticity constant, $\kappa \in [0,1]$ represents the degeneracy parameter to distinguish between two cases of problems in our study: $\kappa=0$ for the degenerate case and $\kappa>0$ the non-degenerate case respectively. Moreover, when $2-\frac{1}{n}< p<2$, we further have the imposition of symmetry condition for the operator $\mathcal{A}$:
\begin{align}\label{eq:cond-sym}
\partial_z \mathcal{A}  \ \ \text{is symmetric} \Leftrightarrow \partial_{z_j} \mathcal{A}_i = \partial_{z_i} \mathcal{A}_j, \quad \forall i, j \in \{1,2,...,n\}.
\end{align}

A significant case of $\mathcal{A}$ is the $p$-Laplace operator $\Delta_p u = |\nabla u|^{p-2}\nabla u$. Although we consider the general degenerate equations of the type in~\eqref{eq:elliptictype}, regularity results in this work can be straightforward in the $p$-Laplace equations.

In the past years, a great deal of effort has gone into investigating nonlinear elliptic/parabolic equations involving measure data. Apart from theoretical interest in mathematics, these equations have also entered in several models describing numerous phenomena in the applied sciences for instance non-Newtonian fluids, electrorheological fluids, flows in porous medium, dislocation and image restoration problems, etc. Together with the researches on the existence and uniqueness of solutions to~\eqref{eq:elliptictype}, the question of regularity concerning the integrability and differentiability properties of solutions also obtained a lot of attraction. From the fact that when $p=2$, the equation $-\Delta u=-\mathrm{div}(\nabla u)=\mu$ attains a certain result 
\begin{align*}
\mu \in L^q_{\mathrm{loc}}(\Omega) \Longrightarrow \nabla u \in W^{1,q}_{\mathrm{loc}}(\Omega), \quad 1<q<\infty,
\end{align*}
and that no longer true for $q=1$, the fractional Sobolev spaces were studied to get the maximal regularity estimates. For instance, in the recent paper by Avelin \textit{et al.} in~\cite{AKM18}, ones can prove that
\begin{align}\label{eq:Lap}
\mu \in L^1_{\mathrm{loc}}(\Omega) \Longrightarrow \nabla u \in W^{\sigma,1}_{\mathrm{loc}}(\Omega), \quad 0<\sigma<1.
\end{align}
Moreover, also in the same paper, authors proved  a very important regularity result for local SOLA solutions to problem~\eqref{eq:elliptictype} when $2 - \frac{1}{n} <p \le 2$, that can be re-stated in the following theorem for readers' convenience:
\begin{theorem} [Limiting case of Calder\'on-Zygmund theory,~\cite{AKM18}]
\label{theo:Avelin}
Let $p > 2 - \frac{1}{n}$ and $\Omega$ be an open subset in $\mathbb{R}^n$. Assume that the operator $\mathcal{A}$ satisfies assumptions~\eqref{eq:cond-ellipgrow}-\eqref{eq:cond-sym} and $u \in W^{1,\max\{1,p-1\}}_{\mathrm{loc}}(\Omega)$ is a SOLA solution to~\eqref{eq:elliptictype}. Then for any $\sigma \in (0,1)$ one has
\begin{align}\label{eq:A_inW}
\mathcal{A}(\nabla u) \in W^{\sigma,1}_{\mathrm{loc}}(\Omega).
\end{align}
Moreover, one can find a constant $C = C(c_{\mathcal{A}},\sigma,n,p)>0$ such that 
\begin{align}
\label{eq:diff_est_local}
\fint_{B_{R/2}}{\int_{B_{R/2}}{\frac{|\mathcal{A}(\nabla u (x)) - \mathcal{A}(\nabla u (y))|}{|x-y|^{n+\sigma}}}dxdy} \le \frac{C}{R^\sigma}\fint_{B_R}{|\mathcal{A}(\nabla u(x))|dx} + \frac{C}{R^\sigma}\left[\frac{|\mu|(B_R)}{R^{n-1}} \right],
\end{align}
for every ball $B_R \Subset \Omega$.
\end{theorem}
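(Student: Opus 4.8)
The plan is to establish \eqref{eq:diff_est_local} first for energy solutions of a regularized problem, with constant depending only on $c_{\mathcal{A}},\sigma,n,p$, and then to recover the general SOLA case by approximation. One writes $\mu$ as a weak-$*$ limit of smooth densities $\mu_j$ with $|\mu_j|(B)\le c\,|\mu|(\widehat B)$ for concentric balls $B\Subset\widehat B$, lets $u_j\in W^{1,p}_{\mathrm{loc}}(\Omega)$ solve \eqref{eq:elliptictype} with datum $\mu_j$, and invokes the classical compactness results for measure-data problems: $u_j\to u$ in $W^{1,q}_{\mathrm{loc}}$ for $q<\min\{p,\tfrac{n(p-1)}{n-1}\}$ and $\nabla u_j\to\nabla u$ a.e., hence $\mathcal{A}(\cdot,\nabla u_j)\to\mathcal{A}(\cdot,\nabla u)$ in $L^1_{\mathrm{loc}}$ by equi-integrability and Vitali. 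Fatou's lemma applied to the Gagliardo double integral on the left of \eqref{eq:diff_est_local}, together with the uniform mass bound on $\mu_j$, then transfers the estimate to $u$ after a harmless enlargement of balls. From now on $\mu$ is assumed smooth and $u$ an energy solution.

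The core is a comparison scheme against $\mathcal{A}$-harmonic replacements. Given $B_R\Subset\Omega$ and any sub-ball $B_{2\rho}(y)\subset B_R$, let $w=w_{y,\rho}\in u+W^{1,p}_0(B_{2\rho}(y))$ solve $-\mathrm{div}\,\mathcal{A}(\cdot,\nabla w)=0$ in $B_{2\rho}(y)$. The first ingredient is the sharp $L^1$ comparison estimate
\begin{equation*}
\fint_{B_{2\rho}(y)}\big|\mathcal{A}(\cdot,\nabla u)-\mathcal{A}(\cdot,\nabla w)\big|\,dx\ \le\ C\,\frac{|\mu|(B_{2\rho}(y))}{\rho^{\,n-1}},
\end{equation*}
obtained by testing the equation for $u-w$ with a Boccardo--Gallou\"et-type truncation $T_k(u-w)$, exploiting the monotonicity inequality $\langle\mathcal{A}(x,z_1)-\mathcal{A}(x,z_2),z_1-z_2\rangle\gtrsim|V(z_1)-V(z_2)|^2$, with $V(z):=(|z|^2+\kappa^2)^{(p-2)/4}z$, and the structure conditions \eqref{eq:cond-ellipgrow}, and then passing from the natural control of $|V(\nabla u)-V(\nabla w)|$ to an $L^1$ bound on $|\mathcal{A}(\nabla u)-\mathcal{A}(\nabla w)|$ through the elementary algebra relating $\mathcal{A}$, $V$ and the identity map. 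In the singular range $2-\tfrac1n<p<2$ the symmetry hypothesis \eqref{eq:cond-sym} and the threshold $p>2-\tfrac1n$ enter precisely here, the latter guaranteeing the a priori integrability $\nabla u\in L^{q}_{\mathrm{loc}}$ for some $q>1$, which is what lets the truncation argument produce a genuine $L^1$ (not merely $L^{q'}$, $q'<1$) bound. The second ingredient is the interior regularity for the homogeneous equation: $\nabla w$ is locally bounded and $\mathcal{A}(\cdot,\nabla w)$ obeys an excess-decay estimate (when $\kappa>0$ the equation is uniformly elliptic and $w\in C^\infty_{\mathrm{loc}}$; when $\kappa=0$ one uses the degenerate $C^{1,\alpha}$-theory), while $\fint_{B_\rho(y)}|\mathcal{A}(\cdot,\nabla w)|\lesssim\fint_{B_{2\rho}(y)}|\mathcal{A}(\cdot,\nabla u)|+\frac{|\mu|(B_{2\rho}(y))}{\rho^{n-1}}$ by the comparison estimate itself.

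These two ingredients are combined through the real-variable characterization of the Gagliardo seminorm as a weighted sum of dyadic finite-difference averages: the left-hand side of \eqref{eq:diff_est_local} is comparable to $R^{-\sigma}\sum_{j\ge0}2^{j\sigma}\fint_{B_{R/2}}\fint_{2^{-j}R/2<|h|\le 2^{-j}R}|\mathcal{A}(\nabla u(x+h))-\mathcal{A}(\nabla u(x))|\,dh\,dx$. One derives, at each dyadic scale, an oscillation-decay estimate for $\mathcal{A}(\cdot,\nabla u)$ --- comparing on balls of the relevant size to the $\mathcal{A}$-harmonic replacement, so that the homogeneous part contributes, via its excess decay, a term with a genuine gain across scales, while the remainder contributes, via the $L^1$ comparison bound, a measure term whose averages over the center $x$ also gain a power of the scale (the elementary Riesz-type bound $\fint_{B_{R/2}}|\mu|(B_r(x))\,dx\lesssim (r/R)^n|\mu|(B_{2R})$ supplies the missing factor). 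Summing over $j$, the series converging precisely because $\sigma<1$, produces the two terms $R^{-\sigma}\fint_{B_R}|\mathcal{A}(\nabla u)|$ and $R^{-\sigma}|\mu|(B_R)/R^{n-1}$ on the right of \eqref{eq:diff_est_local}.

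The main obstacle is the sharp $L^1$ comparison estimate across the full range $p>2-\tfrac1n$: in the singular case the energy method only controls the quadratic quantity $|V(\nabla u)-V(\nabla w)|^2$, and upgrading this to an $L^1$ --- rather than a useless $L^{q'}$, $q'<1$ --- bound on $|\mathcal{A}(\nabla u)-\mathcal{A}(\nabla w)|$ is where the symmetry condition \eqref{eq:cond-sym} and the threshold $p>2-\tfrac1n$ become indispensable; the degenerate ($\kappa=0$) interior regularity required for the homogeneous comparison map is a second, milder, technical point. By contrast, the dyadic summation and the passage to the SOLA limit are, once these comparison ingredients are in hand, essentially routine.
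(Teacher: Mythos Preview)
The paper does not contain a proof of Theorem~\ref{theo:Avelin}. This theorem is quoted from \cite{AKM18} (as the label ``Limiting case of Calder\'on-Zygmund theory,~\cite{AKM18}'' indicates) and is used as a black-box tool: the only proofs in Section~\ref{sec:proofs} are those of Theorems~\ref{theo:main} and~\ref{theo:main-B}, and in the proof of Theorem~\ref{theo:main} the local estimate~\eqref{eq:diff_est_local} is simply invoked (see~\eqref{est:Qk-2}). So there is nothing in this paper to compare your proposal against.

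For what it is worth, your sketch is a reasonable high-level outline of how the result is actually established in \cite{AKM18}: the reduction to energy solutions, the $\mathcal{A}$-harmonic comparison scheme, the sharp $L^1$ comparison estimate (with the symmetry hypothesis and the threshold $p>2-\tfrac{1}{n}$ entering in the singular range), the excess decay for the homogeneous problem, and the dyadic summation via a difference-quotient characterization of the Gagliardo seminorm are indeed the main ingredients there. But if your task was to reproduce the proof \emph{of this paper}, you have overshot: the present paper treats Theorem~\ref{theo:Avelin} as a given input and focuses entirely on passing from the local interior estimate~\eqref{eq:diff_est_local} to the global weighted estimate~\eqref{eq:est-main} via a Whitney-type decomposition near $\partial\Omega$.
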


It is worth noting that for the nonlinear elliptic problem with measure data, the weak solutions may be not unique (see a counterexample in~\cite{Serrin}). Therefore, a rather extensive literature is in place regarding to many definitions of solutions to such equations, where the existence and uniqueness are possible: entropy solutions in~\cite{BBGGPV1995}; renormalized solutions in~\cite{Maso1999, Maso1997}, SOLA in~\cite{BG1989, BG1992} (see Definition~\ref{def:distri_sol}). Here, in this article we shall adopt the concept of SOLA solutions when dealing with measure data problem~\eqref{eq:elliptictype}, whose definition will be specified in Section~\ref{sec:SOLA} below. 

In the case when $p>2 - \frac{1}{n}$, an impressive result $\mathcal{A}(\nabla u) \in W^{\sigma,1}_{\mathrm{loc}}(\Omega)$ comes along with~\eqref{eq:diff_est_local} in Theorem~\ref{theo:Avelin} complete linearization effect of the equation with respect to fractional differentiability of weak solutions. In addition, one concludes that for the nonlinear problem~\eqref{eq:elliptictype}, results obtained are exactly the same as the linear case $-\Delta u = \mu$ via fundamental solutions as in~\eqref{eq:Lap}. Besides, there have been a number of intensive contributions for local fractional regularity for the solutions to measure data problem, such as the differentiability for $\nabla u$ when $p=2$ in~\cite{Min2007}; the Calder\'on-Zygmund type estimates for $\mathcal{A}(\nabla u)$ in the scale of Besov or Triebel-Lizorkin spaces when $p\ge 2, n=2$ obtained in~\cite{BDW2020}; results for vectorial case in~\cite{KM2018}; or some results concerning the global gradient estimates in Lorentz or Morrey spaces in~\cite{MPT2018,MPT19,PNCCM,HP2019} with the singular range of $p$, i.e. $1 < p \le 2- \frac{1}{n}$; and many many further interesting results in~\cite{DHM1997,CM2014,BCDK2018,DM2010,HH2017,Veron,PNJDE}, etc. 

Motivated by the work in~\cite{AKM18}, in the present article we concentrate in studying the limiting case of Calder\'on-Zygmund estimates in Theorem~\ref{theo:Avelin} up to the boundary with smoothness assumption on $\partial\Omega$. Restrict to the case $p>2-\frac{1}{n}$, we herein construct an appropriate function class to achieve a global regularity results that corresponding to the ones proved in~\cite[Theorem 1.2]{AKM18}. More precisely, based on the idea to work in weighted fractional Sobolev spaces equipped with weights chosen as a power of the distance to a point at the boundary (we refer to Definition~\ref{def:Gagli-w} below), this leads us to apply local results in Theorem~\ref{theo:Avelin} to a set of sufficiently small balls in $\Omega$. 

We now state our main results via two following theorems.
\begin{theorem}\label{theo:main}
Let $p > 2 - \frac{1}{n}$, $\sigma \in (0,1)$ and $\Omega$ be an open bounded and smooth domain in $\mathbb{R}^n$. Assume that the operator $\mathcal{A}$ satisfies assumptions~\eqref{eq:cond-ellipgrow}-\eqref{eq:cond-sym} and $u \in W^{1,\max\{1,p-1\}}(\Omega)$ is a SOLA solution to~\eqref{eq:elliptictype}. Then for every $\alpha$, $\beta>0$ satisfying $\alpha+\beta>\sigma$, one has
\begin{align}\label{eq:main}
\mathcal{A}(\nabla u) \in W^{\sigma,1}_{G}(\Omega;\alpha,\beta).
\end{align}
Moreover, one can find a constant $C = C(c_{\mathcal{A}},\sigma,n,p,\alpha,\beta,\mathrm{diam}(\Omega))>0$ such that
\begin{align}\label{eq:est-main} 
\int_{\Omega}{\int_{\Omega} d^{\alpha}(x) d^{\beta}(y) {\frac{|\mathcal{A}(\nabla u (x)) - \mathcal{A}(\nabla u (y))|}{|x-y|^{n+\sigma}}}dxdy} \le C \left(\int_{\Omega} |\mathcal{A}(\nabla u(x))| dx + |\mu|(\Omega)\right),
\end{align}
where $d^{\vartheta}(x) := \left[\mathrm{dist}(x,\partial \Omega)\right]^{\vartheta}$.
\end{theorem}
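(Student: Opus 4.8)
The plan is to deduce the global estimate from the local estimate \eqref{eq:diff_est_local} via a covering argument tuned to the distance function. First I would fix a small scale: since $\Omega$ is bounded and smooth, there is $R_0>0$ depending on $\Omega$ such that for every $x\in\Omega$ the ball $B_{2R(x)}(x)$ with $R(x)\sim \mathrm{dist}(x,\partial\Omega)$ is compactly contained in $\Omega$ whenever $x$ is interior, and near the boundary one works with balls of radius comparable to the distance so that $B_{R(x)}(x)\Subset\Omega$. Concretely, for a Whitney-type decomposition of $\Omega$ into balls $\{B_i=B_{r_i}(x_i)\}$ with $r_i\simeq \mathrm{dist}(x_i,\partial\Omega)$ and bounded overlap of the dilates $\{5B_i\}$, apply Theorem~\ref{theo:Avelin} on each $5B_i$ to get
\[
\fint_{B_i}\int_{B_i}\frac{|\mathcal{A}(\nabla u(x))-\mathcal{A}(\nabla u(y))|}{|x-y|^{n+\sigma}}\,dx\,dy \le \frac{C}{r_i^{\sigma}}\fint_{5B_i}|\mathcal{A}(\nabla u)|\,dx + \frac{C}{r_i^{\sigma}}\,\frac{|\mu|(5B_i)}{r_i^{n-1}}.
\]
On $B_i$ one has $d^{\alpha}(x)d^{\beta}(y)\simeq r_i^{\alpha+\beta}$, so multiplying through by $r_i^{n+\alpha+\beta}$ (to absorb the $\fint$ normalizations $r_i^{-2n}$ and the weight) the $i$-th near-diagonal contribution is bounded by $C\, r_i^{\alpha+\beta-\sigma}\big(\int_{5B_i}|\mathcal{A}(\nabla u)| + |\mu|(5B_i)\big)$, and since $\alpha+\beta>\sigma$ and $r_i\lesssim \mathrm{diam}(\Omega)$, summing over $i$ using bounded overlap yields the "diagonal" part of \eqref{eq:est-main}.

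The remaining, and genuinely more delicate, piece is the "off-diagonal" part: the contribution to the double integral from pairs $(x,y)$ lying in far-apart Whitney balls, i.e. $\int\int_{|x-y|\gtrsim \max(d(x),d(y))}$. Here I would split $|\mathcal{A}(\nabla u(x))-\mathcal{A}(\nabla u(y))|\le |\mathcal{A}(\nabla u(x))|+|\mathcal{A}(\nabla u(y))|$ and estimate, for fixed $x$,
\[
\int_{\{y:\,|x-y|\ge c\,d(x)\}} \frac{d^{\beta}(y)}{|x-y|^{n+\sigma}}\,dy \le \int_{\{|x-y|\ge c\,d(x)\}} \frac{(\mathrm{diam}\,\Omega)^{\beta}}{|x-y|^{n+\sigma}}\,dy \le \frac{C\,(\mathrm{diam}\,\Omega)^{\beta}}{d^{\sigma}(x)},
\]
so the term with $|\mathcal{A}(\nabla u(x))|$ is controlled by $C\int_\Omega d^{\alpha-\sigma}(x)|\mathcal{A}(\nabla u(x))|\,dx$. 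Since $\alpha>0$ this looks borderline when $\alpha<\sigma$; the fix is to use the flexibility $\alpha+\beta>\sigma$ together with a dyadic decomposition of the $y$-annuli around $x$ so that the extra smallness of $d^{\beta}(y)$ for $y$ near $\partial\Omega$ compensates — precisely, one distinguishes whether $d(y)\lesssim |x-y|$ or $d(y)\gtrsim|x-y|$, bounding $d^{\beta}(y)\le |x-y|^{\beta}\,(\mathrm{diam}\,\Omega)^{0}$ in the former regime so that the $y$-integral gains $|x-y|^{\beta-n-\sigma}$ and converges with a factor $d^{\beta-\sigma}_{+}$… and symmetrically in $x$. Carrying this bookkeeping so that the final exponents on $d(x)$ and $d(y)$ are nonnegative (using $\alpha,\beta>0$ and $\alpha+\beta>\sigma$) is the main obstacle, and is exactly where the hypothesis $\alpha+\beta>\sigma$ is consumed.

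Finally, to turn the pointwise bounds $d^{\alpha-\sigma}$ etc. into the clean right-hand side of \eqref{eq:est-main}, I would invoke a Hardy-type inequality (valid on bounded smooth $\Omega$) to absorb negative powers of the distance against $\int_\Omega|\mathcal{A}(\nabla u)|$, or—more simply—observe that on each Whitney ball $d(x)^{\alpha-\sigma}\simeq r_i^{\alpha-\sigma}$ and re-sum as in the diagonal step, again using $\alpha+\beta>\sigma$ and boundedness of $\Omega$ to make the geometric series in $r_i$ converge; the constant then depends on $c_{\mathcal{A}},\sigma,n,p,\alpha,\beta$ and $\mathrm{diam}(\Omega)$ as claimed. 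Combining the diagonal and off-diagonal estimates gives \eqref{eq:est-main}, and in particular finiteness of the left-hand side yields the membership \eqref{eq:main}, i.e. $\mathcal{A}(\nabla u)\in W^{\sigma,1}_{G}(\Omega;\alpha,\beta)$.
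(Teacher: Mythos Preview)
Your proposal is correct and follows essentially the same strategy as the paper: apply the local estimate \eqref{eq:diff_est_local} on balls whose radius is comparable to the distance to $\partial\Omega$ to control the near-diagonal contribution, handle far-apart pairs by the crude splitting $|\mathcal{A}(\nabla u(x))-\mathcal{A}(\nabla u(y))|\le|\mathcal{A}(\nabla u(x))|+|\mathcal{A}(\nabla u(y))|$ together with $d^{\beta}(y)\lesssim|x-y|^{\beta}$ in that regime, and sum a geometric series using $\alpha+\beta>\sigma$. The only cosmetic difference is packaging: the paper organizes the covering by first slicing a boundary layer $\Omega_0$ into dyadic shells $\Omega_k=\{r_{k+1}<d(x)\le r_k\}$ and then covering each shell by balls of radius $r_k$, whereas you invoke a Whitney decomposition directly---these produce the same family of balls and the same arithmetic, so your near-/off-diagonal split is exactly the paper's split into the cases $|k-j|\le 1$ versus $|k-j|\ge 2$ (and ``close'' versus ``far'' centers within a shell); in particular the Hardy-inequality detour you float is unnecessary, and your simpler alternative of bounding $d(x)^{\alpha+\beta-\sigma}\le(\mathrm{diam}\,\Omega)^{\alpha+\beta-\sigma}$ on each Whitney ball is precisely what the paper does.
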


\begin{theorem}\label{theo:main-B}
Let $p \ge 2$, $\sigma \in (0,1)$ and $\Omega$ be an open bounded and smooth domain in $\mathbb{R}^n$. Assume that the operator $\mathcal{A}$ satisfies assumptions~\eqref{eq:cond-ellipgrow} and $u \in W^{1,p-1}(\Omega)$ is a SOLA solution to~\eqref{eq:elliptictype}. Then for every $\alpha$, $\beta>0$ satisfying $\alpha+\beta>\sigma$ and $\frac{1}{p-1} \le \gamma \le 1$ one has
$\mathbb{E}(\nabla u) \in W^{\gamma\sigma,\frac{1}{\gamma}}_{G}(\Omega;\alpha,\beta)$,
where the function $\mathbb{E}: \ \mathbb{R}^n \mapsto [0,\infty)$ defined by
\begin{align}\label{def:E}
\mathbb{E}(\xi) = \left(|\xi| + \kappa\right)^{\gamma p - \gamma-1} \xi, \qquad \xi \in \mathbb{R}^n.
\end{align}
In particular, one can find a constant $C = C(c_{\mathcal{A}},\sigma,n,p,\alpha,\beta,\gamma)>0$ such that 
\begin{align}\label{est-main-B}
[\mathbb{E}(\nabla u)]_{W_G^{\gamma\sigma,\frac{1}{\gamma}}(\Omega;\alpha,\beta)} \le C \left(\int_{\Omega} |\mathcal{A}(\nabla u(x))| dx + |\mu|(\Omega)\right)^{\gamma}.
\end{align}
\end{theorem}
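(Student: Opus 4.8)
The plan is to reduce Theorem~\ref{theo:main-B} to Theorem~\ref{theo:main} by exploiting the pointwise comparison between the vector field $\mathcal{A}(\nabla u)$ and the "genuine" nonlinear quantity $V(\nabla u) := (|\nabla u|+\kappa)^{p-2}\nabla u$, together with an elementary exponent-interpolation inequality. First I would record the standard monotonicity-type bounds: under~\eqref{eq:cond-ellipgrow} there are constants depending only on $n,p,c_{\mathcal{A}}$ such that $|\mathcal{A}(z_1)-\mathcal{A}(z_2)| \simeq (|z_1|^2+|z_2|^2+\kappa^2)^{\frac{p-2}{2}}|z_1-z_2|$ for $p\ge 2$, and likewise $|V(z_1)-V(z_2)| \simeq (|z_1|^2+|z_2|^2+\kappa^2)^{\frac{p-2}{2}}|z_1-z_2|$, so that $|\mathcal{A}(\nabla u(x))-\mathcal{A}(\nabla u(y))| \simeq |V(\nabla u(x))-V(\nabla u(y))|$ pointwise a.e. Hence the finiteness of the left-hand side of~\eqref{eq:est-main} is equivalent to the corresponding statement for $V(\nabla u)$, and Theorem~\ref{theo:main} gives us $V(\nabla u)\in W^{\sigma,1}_G(\Omega;\alpha,\beta)$ with the stated bound.

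Next I would observe that $\mathbb{E}(\xi) = (|\xi|+\kappa)^{\gamma p-\gamma-1}\xi$ is precisely the "$\gamma$-th power root" of $V$ in the sense that $|\mathbb{E}(z_1)-\mathbb{E}(z_2)| \simeq |V(z_1)-V(z_2)|^{\gamma}$ for all $z_1,z_2\in\mathbb{R}^n$, with implicit constants depending only on $p,\gamma$; this is the standard lemma on fractional powers of the map $z\mapsto (|z|+\kappa)^{s}z$ (valid since $\gamma p-\gamma-1 = \gamma(p-2)+(\gamma-1)\ge 0$ for $p\ge 2$ and $\gamma\ge \frac{1}{p-1}$, so $\mathbb{E}$ is the honest $\gamma$-power of $V$). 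Using this with $z_1=\nabla u(x)$, $z_2=\nabla u(y)$ and raising the whole integrand of the Gagliardo–type seminorm to the appropriate power, the seminorm $[\mathbb{E}(\nabla u)]_{W_G^{\gamma\sigma,1/\gamma}(\Omega;\alpha,\beta)}$ becomes
\begin{align*}
\left(\int_\Omega\int_\Omega d^{\alpha}(x)d^{\beta}(y)\frac{|\mathbb{E}(\nabla u(x))-\mathbb{E}(\nabla u(y))|^{\frac1\gamma}}{|x-y|^{n+\frac{\gamma\sigma}{\gamma}}}\,dx\,dy\right)^{\gamma}
\simeq \left(\int_\Omega\int_\Omega d^{\alpha}(x)d^{\beta}(y)\frac{|V(\nabla u(x))-V(\nabla u(y))|}{|x-y|^{n+\sigma}}\,dx\,dy\right)^{\gamma},
\end{align*}
after checking that the exponent in the kernel matches: $n+\frac{1}{\gamma}(\gamma\sigma) = n+\sigma$. (One must be slightly careful about how the weights interact with the power $1/\gamma$ in the definition of $W^{\gamma\sigma,1/\gamma}_G$; I would verify that the weight exponents $\alpha,\beta$ are carried untouched, which is consistent with the way the seminorm is normalized in Definition~\ref{def:Gagli-w}, or else absorb a harmless $\gamma$-dependent constant.)

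Combining the three displays with the estimate~\eqref{eq:est-main} from Theorem~\ref{theo:main} then yields
\begin{align*}
[\mathbb{E}(\nabla u)]_{W_G^{\gamma\sigma,\frac1\gamma}(\Omega;\alpha,\beta)} \le C\left(\int_\Omega\int_\Omega d^{\alpha}(x)d^{\beta}(y)\frac{|\mathcal{A}(\nabla u(x))-\mathcal{A}(\nabla u(y))|}{|x-y|^{n+\sigma}}\,dx\,dy\right)^{\gamma} \le C\left(\int_\Omega|\mathcal{A}(\nabla u(x))|\,dx + |\mu|(\Omega)\right)^{\gamma},
\end{align*}
which is exactly~\eqref{est-main-B}, and in particular $\mathbb{E}(\nabla u)\in W_G^{\gamma\sigma,1/\gamma}(\Omega;\alpha,\beta)$. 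The main obstacle I anticipate is the algebraic lemma $|\mathbb{E}(z_1)-\mathbb{E}(z_2)|\simeq |V(z_1)-V(z_2)|^{\gamma}$: it is standard for $\kappa>0$ and for $\gamma$ in the stated range, but the constants degenerate as $\gamma\to\frac{1}{p-1}$ and one has to handle the degenerate case $\kappa=0$ by a separate (still elementary) argument exploiting homogeneity; this is why the hypothesis is restricted to $p\ge 2$ and $\gamma\ge\frac{1}{p-1}$, ensuring $\gamma p-\gamma-1\ge 0$ so that $\mathbb{E}$ is monotone and genuinely behaves like a power of $V$. Everything else is bookkeeping with the weighted Gagliardo seminorm and a direct appeal to Theorem~\ref{theo:main}.
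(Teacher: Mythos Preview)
Your proposal is correct and follows essentially the same route as the paper: both reduce Theorem~\ref{theo:main-B} to Theorem~\ref{theo:main} via the pointwise inequality $|\mathbb{E}(\xi)-\mathbb{E}(\zeta)|^{1/\gamma}\le C(p,\gamma)\,|\mathcal{A}(\xi)-\mathcal{A}(\zeta)|$, then plug into the weighted Gagliardo seminorm (whose kernel exponent indeed collapses to $n+\sigma$). The only cosmetic difference is that you pass through the auxiliary map $V(\xi)=(|\xi|+\kappa)^{p-2}\xi$ and a two-sided equivalence $|\mathbb{E}(z_1)-\mathbb{E}(z_2)|\simeq|V(z_1)-V(z_2)|^{\gamma}$, whereas the paper bounds $|\mathbb{E}(\xi)-\mathbb{E}(\zeta)|^{1/\gamma}$ directly by $(|\xi|+|\zeta|+\kappa)^{p-2}|\xi-\zeta|$ and then invokes the monotonicity of $\mathcal{A}$; also, only the upper bound is needed, and that direction does \emph{not} degenerate as $\gamma\to\tfrac{1}{p-1}$ or as $\kappa\to 0$, so your stated ``main obstacle'' is in fact harmless.
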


It is remarkable that when $\gamma=1$, one easily obtain~\eqref{eq:est-main} from~\eqref{est-main-B}. Moreover, in the case $\gamma = \frac{1}{p-1}$, one has
\begin{align}
\nabla u \in W_G^{\frac{\sigma}{p-1},p-1}(\Omega;\alpha,\beta), \quad \mbox{ for every } \sigma \in (0,1).
\end{align}

The remainder of our paper is organized as follows. In the next section we introduce some mathematical preliminaries and function spaces. This section focuses on the concept of weighted fractional Sobolev spaces by introducing some basic notation, definitions and some properties of function spaces.  Then, we end up with a section devoted to proving main results in this paper, and it allows us to conclude a global fractional Caccioppoli type inequality for solutions to measure data problem~\eqref{eq:elliptictype}.

\section{Preliminaries and function spaces}\label{sec:pre}
\subsection{Basic notation}

The general constant, although in various occurrences from line to line, will always be denoted by $C$. And the dependencies of $C$ will be highlighted between parentheses if needed. For example, when $C$ depends on some real numbers $n,p,\sigma,c_\mathcal{A}$ we will write $C = C(n,p,\sigma,c_\mathcal{A})$. In what follows, we simply write $B_{\varrho}(\xi)$ to denote the ball with radius $\varrho$ and centered at $\xi \in \Omega$; and with a not relevant center it will be written $B_{\varrho}$ for simplicity. Throughout the paper, for $1 \le q<\infty$, we employ the familiar notation $L^q (\Omega)$ to denote the usual Lebesgue spaces; and $W^{s,q}(\Omega)$ stands for the Sobolev spaces. Finally, for a given measurable subset $\mathcal{O} \subset \mathbb{R}^n$, the standard notation of integral average of a function $\varphi \in L^1(\mathcal{O})$ is denoted by
\begin{align*}
(\varphi)_{\mathcal{O}} = \fint_{\mathcal{O}}{\varphi(\xi)d\xi} = \frac{1}{|\mathcal{O}|}\int_{\mathcal{O}}{\varphi(\xi)d\xi},
\end{align*}
where $|\mathcal{O}|$ stands for the Lebesgue measure of $\mathcal{O}$ in $\mathbb{R}^n$.

\subsection{The notion of solution: SOLA}\label{sec:SOLA}

In a natural way, one has the distribution notion of solutions to~\eqref{eq:elliptictype} as in the next definition.

\begin{definition}[Distributional solution]\label{def:distri_sol}
A function $u \in W^{1,1}_0(\Omega)$ is said to be a very weak solution to~\eqref{eq:elliptictype} if $\mathcal{A}(x,\nabla u) \in L^1(\Omega;\mathbb{R}^n)$ and
\begin{align}\nonumber
\int_\Omega{\langle \mathcal{A}(x,\nabla u),\nabla \varphi\rangle dx} = \int_\Omega{\varphi d\mu},
\end{align}
holds for all $\varphi \in C_c^\infty(\Omega)$.
\end{definition}

This type of distributional solutions may exist. However, according to the counterexample by Serrin in~\cite{Serrin}, the problem of uniqueness of such solutions poses. For this reason, many possible definitions have been proposed such as the notions of entropy solutions, SOLA - \emph{Solutions Obtained as Limits of Approximations}, renormalized solutions, etc (see~\cite{BG1989, BG1992,Dall1996, Min2007} and many references therein or later concerning the nonlinear measure data problems). For the present purpose of this paper, we confine ourselves to the notion of SOLA, whose definition can be figured below.

\begin{definition}[Local SOLA,~\cite{BG1989,BG1992,AKM18}]\label{def:SOLA}
A function $u \in W^{1,1}_{\mathrm{loc}}(\Omega)$ is a local SOLA to~\eqref{eq:elliptictype} under assumptions~\eqref{eq:cond-ellipgrow} if one can find a sequence $\{u_k\} \subset W^{1,p}_{\mathrm{loc}}(\Omega)$ to the following equations
\begin{align*}
-\mathrm{div}(\mathcal{A}(x,\nabla u_k)) = \mu_k \in L^\infty_{\mathrm{loc}}(\Omega)
\end{align*}
such that $u_k \rightharpoonup u$ weakly in $W^{1,1}_{\mathrm{loc}}(\Omega)$, where the sequence $\{\mu_k\}$ converges weakly in the sense of measures and satisfies
\begin{align*}
\limsup_k{|\mu_k|(B)} \le |\mu|(B),
\end{align*}
for any ball $B\Subset \Omega$.
\end{definition}

\begin{remark}\label{rem:1}
Follow the arguements in~\cite{BG1989} and~\cite[Proposition 2.2]{AKM18}, with $p>2-\frac{1}{n}$, if $u \in W^{1,1}_{\mathrm{loc}}(\Omega)$ is a local SOLA to problem~\eqref{eq:elliptictype} and $\{u_k\}$ is the approximating solutions in Definition~\ref{def:SOLA}, then $\{u_k\}$ converges strongly to $u$ in $W^{1,q}_{\mathrm{loc}}(\Omega)$ for any $q<\min\left\{p, \frac{n(p-1)}{n-1}\right\}$. In particular, this leads to establish that there exists a very weak solution $u \in W^{1,\max\{1,p-1\}}_{\mathrm{loc}}(\Omega)$ to~\eqref{eq:elliptictype} .
\end{remark}

\subsection{Function spaces}
\label{sec:FS}

Let us firstly recall the classical definition of fractional Sobolev spaces in the sense of Gagliardo, see~\cite{DPV12, AKM18}. 

\begin{definition}{(Gagliardo's fractional Sobolev space)}\label{def:Gagliardosp}
Let $\Omega$ be a general open set in $\mathbb{R}^n$ with $n \ge 2$ and a fractional exponent $s \in (0,1)$. Then, for any $1 \le q < \infty$, the fractional Sobolev space $W^{s,q}_G(\Omega)$ is defined by
\begin{align}
W_G^{s,q}(\Omega) = \left\{v \in L^q(\Omega): \frac{|v(x) - v(y)|}{|x-y|^{\frac{n}{q}+s}} \in L^q(\Omega \times \Omega) \right\},
\end{align}
and this is the Banach space endowed with the Gagliardo type norm as below
\begin{align}\label{eq:Gnorm}
\|v\|_{W_G^{s,q}(\Omega)} = \left[ \int_\Omega{|v(x)|^q dx} + \int_{\Omega}{\int_{\Omega}{\frac{|v(x)-v(y)|^q}{|x-y|^{n+sq}}}dxdy} \right]^{\frac{1}{q}}.
\end{align}
\end{definition}

In what follows, we will denote 
\begin{align}\label{eq:Gseminorm}
[v]_{W_G^{s,q}(\Omega)} := \left[ \int_{\Omega}{\int_{\Omega}{\frac{|v(x)-v(y)|^q}{|x-y|^{n+sq}}}dxdy} \right]^{\frac{1}{q}}
\end{align}
being the Gagliardo semi-norm of $v$. The space $W_G^{s,q}(\Omega)$ is the interpolated space between $L^q(\Omega)$ and $W^{1,q}(\Omega)$. Moreover, we write $W^{\sigma,1}_{G,\mathrm{loc}}(\Omega)$ to denote the set of all functions $v \in W^{\sigma,1}_{G}(\Omega')$ for any open subset $\Omega'$ of $\Omega$.

There are some well-known Sobolev's embedding theorems in the case of fractional spaces. For instance, one refers to~\cite[Proposition 2.1]{DPV12} for the following property
\begin{align*}
 W_G^{t,q}(\Omega) \subseteq W_G^{s,q}(\Omega), \quad \mbox{for all } \ t \in (s,1).
\end{align*}

On the other hand, with an additional assumption on the regularity on the boundary of the domain $\Omega$, particularly the bounded Lipschitz domain $\Omega$ (see~\cite[Proposition 2.2]{DPV12}), it obtains that
\begin{align*}
W_G^{1,q}(\Omega) \subseteq W_G^{s,q}(\Omega).
\end{align*}

Let us here also recall the \emph{fractional Sobolev embedding} as follows. If $\Omega \subset \mathbb{R}^n$ is a domain with $C^{0,1}$-boundary and $sq<n$, then
\begin{align*}
W^{s,q}(\Omega) \hookrightarrow L^{\frac{nq}{n-sq}}(\Omega),
\end{align*}
with continuous embedding. In other words, if $\Omega$ is a bounded Lipschitz domain and $sq<n$ then one can find $C = C(n,q,s,\mathrm{diam}(\Omega),[\partial \Omega]_{0,1})>0$ such that for $v \in L^{\frac{nq}{n-sq}}(\Omega)$, there holds
\begin{align*}
\|v\|_{L^{\frac{nq}{n-sq}}(\Omega)} \le C \|v\|_{W_G^{s,q}(\Omega)}.
\end{align*}
Moreover, a Poincar{\'e}-type inequality in fractional Sobolev spaces can be stated as below. There exists $C = C(n,q,s)>0$ such that the following Poincar\'e inequality holds
\begin{align}\label{eq:Poincare}
\int_{B_R}{|v - (v)_{B_R}|^q dx} \le C R^{sq}\int_{B_R}{\int_{B_R}{\frac{|v(x)-v(y)|^q}{|x-y|^{n+sq}}dx}dy},
\end{align}
for all $v \in W^{s,q}_G(B_R)$ and every ball $B_R \Subset \Omega$.

In this paper, for the present purpose, we will generalize and consider a weighted Gagliardo's fractional Sobolev space associated to a power of the distance to a point at boundary of the domain. More precisely, in order to study the fractional order regularity for gradient of solutions to measure data problem~\eqref{eq:elliptictype}, we employ here the \emph{weighted Gagliardo's fractional Sobolev spaces} in Definition~\ref{def:Gagli-w} below.

\begin{definition}{(A weighted Gagliardo's fractional Sobolev space)}
\label{def:Gagli-w}
Let $\Omega$ be an open bounded and Lipschitz domain in $\mathbb{R}^n$. For any $q \in [1,\infty)$, $s \in (0,1)$ and $\alpha$, $\beta \ge 0$, we define the weighted Gagliardo's fractional Sobolev space
\begin{align}
W_{G}^{s,q}(\Omega;\alpha,\beta) = \left\{v \in L^q(\Omega): \ d^{\frac{\alpha}{q}}(x)d^{\frac{\beta}{q}}(y)\frac{|v(x) - v(y)|}{|x-y|^{\frac{n}{q}+s}} \in L^q(\Omega \times \Omega) \right\},
\end{align}
which is endowed with the natural norm
\begin{align}\label{eq:G-w}
\|v\|_{W_G^{s,q}(\Omega;\alpha,\beta)} = \left[ \int_\Omega{|v(x)|^q dx} + \int_{\Omega}{\int_{\Omega}{d^{\alpha}(x)d^{\beta}(y)\frac{|v(x)-v(y)|^q}{|x-y|^{n+sq}}}dxdy} \right]^{\frac{1}{q}}.
\end{align}
Here $d(x)=\mathrm{dist}(x,\partial \Omega)$ denotes the distance from $x$ to the boundary of $\Omega$. 
\end{definition}
When $\alpha=\beta$, we simply write $W_{G}^{s,q}(\Omega;\alpha)$ instead of $W_{G}^{s,q}(\Omega;\alpha,\alpha)$. Similar to the non-weight spaces, we also denote the following term
\begin{align}\label{eq:Gsemi-w}
[v]_{W_G^{s,q}(\Omega;\alpha,\beta)} := \left[ \int_{\Omega}{\int_{\Omega}{d^{\alpha}(x)d^{\beta}(y)\frac{|v(x)-v(y)|^q}{|x-y|^{n+sq}}}dxdy} \right]^{\frac{1}{q}}
\end{align}
for the weighted Gagliardo semi-norm of $v \in W_G^{s,q}(\Omega;\alpha,\beta)$. 

On the other hand, it is clear to see that
\begin{align*}
[v]_{W_G^{s,q}(\Omega;\alpha,\beta)} \le \left(\mathrm{diam}(\Omega)\right)^{\frac{\alpha+\beta}{q}} [v]_{W_G^{s,q}(\Omega)},
\end{align*}
and this allows us to obtain the following relation holds
\begin{align*}
W_G^{s,q}(\Omega) \subset W_G^{s,q}(\Omega;\alpha,\beta).
\end{align*}

\section{Proofs of main theorems}\label{sec:proofs}

By applying the local results from Theorem~\ref{theo:Avelin} and some important properties of weighted fractional Sobolev's spaces discussed in Section~\ref{sec:pre}, we are ready to prove the main theorems in this section, where the fractional regularity for the solutions to~\eqref{eq:elliptictype} up to the boundary of a smooth domain $\Omega$ will be established in the setting of weighted fractional Sobolev spaces. 

\begin{proof}[Proof of Theorem~\ref{theo:main}]
Let us consider $0< R_0 < \mathrm{diam}(\Omega)/2$ and  
$$\Omega_0 := \left\{x \in \Omega: \  0< d(x) \le \frac{R_0}{2}  \right\}$$ 
as the set of points near the boundary of $\Omega$. We first decompose $\Omega_0 = \bigcup_{k=1}^{\infty} \Omega_k$, where $\Omega_k$ is defined by 
\begin{align*}
\Omega_k := \left\{x \in \Omega: \  r_{k+1} < d(x) \le r_k  \right\}, 
\end{align*}
with $r_k = 2^{-k} R_0$ for every $k \in \mathbb{N}^*$. For simplicity of notation, let us introduce the following function
\begin{align*}
\mathbb{T}(x,y) := d^{\alpha}(x) d^{\beta}(y) {\frac{|\mathcal{A}(\nabla u (x)) - \mathcal{A}(\nabla u (y))|}{|x-y|^{n+\sigma}}}, \quad x, \, y \in \Omega, \, x \neq y.
\end{align*}
The integral of $\mathbb{T}$ over $\Omega \times \Omega$ can be split into three terms of integrals as follows
\begin{align*}
\int_{\Omega} \int_{\Omega} \mathbb{T}(x,y) dx dy & = \int_{\Omega_0} \int_{\Omega_0} \mathbb{T}(x,y) dx dy  + 2  \int_{\Omega_0} \int_{\Omega\setminus \Omega_0} \mathbb{T}(x,y) dx dy \\
& \qquad \qquad  + \int_{\Omega\setminus \Omega_0} \int_{\Omega\setminus \Omega_0} \mathbb{T}(x,y) dx dy \\
& =: (\mathbb{I}) + 2(\mathbb{II}) + (\mathbb{III}),
\end{align*}
where 
\begin{align*}
(\mathbb{III}) &= \int_{\Omega\setminus \Omega_0} \int_{\Omega\setminus \Omega_0} \mathbb{T}(x,y) dx dy; \quad (\mathbb{II}) = \int_{\Omega_0} \int_{\Omega\setminus \Omega_0} \mathbb{T}(x,y) dx dy; 
\end{align*}
and 
\begin{align*}
(\mathbb{I}) &= \int_{\Omega_0} \int_{\Omega_0} \mathbb{T}(x,y) dx dy.
\end{align*}
One can see that the two last terms $(\mathbb{II})$ and $(\mathbb{III})$ containing the integrals over the interior domain $\Omega\setminus \Omega_0$, which can be estimated by applying the local inequality~\eqref{eq:diff_est_local} in Theorem~\ref{theo:Avelin}. Therefore, the remaining difficulty lies in the first one $(\mathbb{I})$. Let us now rewrite $(\mathbb{I})$ as 
\begin{align}\nonumber
(\mathbb{I}) & = \sum_{k,j=1}^{\infty} \int_{\Omega_{k}}\int_{\Omega_j} \mathbb{T}(x,y) dx dy \\ \nonumber
& = \sum_{|k-j|\ge 2} \int_{\Omega_{k}}\int_{\Omega_j} \mathbb{T}(x,y) dx dy + \sum_{|k-j| = 1} \int_{\Omega_{k}}\int_{\Omega_j} \mathbb{T}(x,y) dx dy \\
& \qquad \qquad + \sum_{k=1}^{\infty} \int_{\Omega_{k}}\int_{\Omega_k} \mathbb{T}(x,y) dx dy\\ \label{est:I3-a}
& =: (\mathbb{I})_1 + (\mathbb{I})_2 + (\mathbb{I})_3,
\end{align}
where 
$$(\mathbb{I})_1 = \sum_{|k-j|\ge 2} \int_{\Omega_{k}}\int_{\Omega_j} \mathbb{T}(x,y) dx dy; \quad (\mathbb{I})_2 = \sum_{|k-j| = 1} \int_{\Omega_{k}}\int_{\Omega_j} \mathbb{T}(x,y) dx dy,$$
and 
$$(\mathbb{I})_3 = \sum_{k=1}^{\infty} \int_{\Omega_{k}}\int_{\Omega_k} \mathbb{T}(x,y) dx dy.$$
We are now in order consider each term on the right-hand side of~\eqref{est:I3-a}. In the first term $(\mathbb{I})_1$, for any $x \in \Omega_k$ and $y \in \Omega_j$ with $|k - j| \ge 2$, one has 
$$|x- y| \ge \max\left\{\frac{r_k}{4}, \frac{r_j}{4}\right\} \ge \frac{r_k+r_j}{8},$$ 
and this allows us to arrive that
\begin{align}\label{est:basic}
\int_{\Omega_{k}}\int_{\Omega_j} & d^{\alpha}(x) d^{\beta}(y)  {\frac{|\mathcal{A}(\nabla u (x))|}{|x-y|^{n+\sigma}}} dx dy \notag \\
& \le r_k^{\alpha} r_j^{\beta} \int_{\Omega_{k}}  \left(\int_{\left\{|\xi|\ge \frac{r_k+r_j}{8}\right\}} {\frac{1}{|\xi|^{n+\sigma}}} d\xi\right) |\mathcal{A}(\nabla u (x))| dx \notag \\
& \le 8^{\sigma} \frac{r_k^{\alpha} r_j^{\beta}}{(r_k+r_j)^{\sigma}} \int_{\Omega_{k}}  \left(\int_{\left\{|\xi|\ge 1\right\}} {\frac{1}{|\xi|^{n+\sigma}}} d\xi\right) |\mathcal{A}(\nabla u (x))| dx \notag \\
& \le C(n,\sigma) \frac{r_k^{\alpha} r_j^{\beta}}{(r_k+r_j)^{\sigma}} \int_{\Omega_{k}}  |\mathcal{A}(\nabla u (x))| dx.
\end{align}
It is important to remark that the last inequality in~\eqref{est:basic} comes from the fact that the integral $\int_{\left\{|\xi|\ge 1\right\}} {\frac{1}{|\xi|^{n+\sigma}}} d\xi$ is finite since $n + \sigma>n$. Applying this estimate into $(\mathbb{I})_1$, one has
\begin{align}\label{est:I31-a}
(\mathbb{I})_1 & = \sum_{|k-j|\ge 2} \int_{\Omega_{k}}\int_{\Omega_j} \mathbb{T}(x,y) dx dy \notag \\ 
 & \le \sum_{|k-j|\ge 2} \left( \int_{\Omega_{k}}\int_{\Omega_j} d^{\alpha}(x) d^{\beta}(y) {\frac{|\mathcal{A}(\nabla u (x))|}{|x-y|^{n+\sigma}}} dx dy \right. \notag \\ &  \qquad \qquad \qquad \left. +  \int_{\Omega_{k}}\int_{\Omega_j} d^{\alpha}(x) d^{\beta}(y) {\frac{|\mathcal{A}(\nabla u (y))|}{|x-y|^{n+\sigma}}} dx dy \right) \notag \\  
& \le C(n,\sigma) \left( (\mathbb{I})_{11} +  (\mathbb{I})_{12} \right),
\end{align}
where 
\begin{align*}
(\mathbb{I})_{11} := \sum_{k-j \ge 2} \left( \frac{r_k^{\alpha} r_j^{\beta}}{(r_k+r_j)^{\sigma}} \int_{\Omega_{k}}  |\mathcal{A}(\nabla u (x))| dx + \frac{r_j^{\alpha} r_k^{\beta}}{(r_k+r_j)^{\sigma}} \int_{\Omega_{j}}  |\mathcal{A}(\nabla u (y))| dy\right),
\end{align*}
and 
\begin{align*}
(\mathbb{I})_{12} := \sum_{j-k \ge 2} \left( \frac{r_k^{\alpha} r_j^{\beta}}{(r_k+r_j)^{\sigma}} \int_{\Omega_{k}}  |\mathcal{A}(\nabla u (x))| dx + \frac{r_j^{\alpha} r_k^{\beta}}{(r_k+r_j)^{\sigma}} \int_{\Omega_{j}}  |\mathcal{A}(\nabla u (y))| dy\right),
\end{align*}
respectively. At this step, since $r_k \le r_j$ for all $k \ge j+2$, there holds
\begin{align*}
(\mathbb{I})_{11} & =  \sum_{j=1}^{\infty} r_j^{\beta-\sigma} \sum_{k=j+2}^{\infty} \frac{r_k^{\alpha} }{(2^{j-k}+1)^{\sigma}}\int_{\Omega_{k}}  |\mathcal{A}(\nabla u (x))| dx \\
& \qquad \qquad \qquad + \sum_{j=1}^{\infty} r_j^{\alpha-\sigma} \int_{\Omega_{j}}  |\mathcal{A}(\nabla u (y))| dy \sum_{k=j+2}^{\infty} \frac{r_k^{\beta} }{(2^{j-k}+1)^{\sigma}} \\
& \le  \sum_{j=1}^{\infty} r_j^{\alpha+\beta-\sigma} \sum_{k=j+2}^{\infty} \int_{\Omega_{k}}  |\mathcal{A}(\nabla u (x))| dx  + \sum_{j=1}^{\infty} r_j^{\beta-\sigma} \int_{\Omega_{j}}  |\mathcal{A}(\nabla u (y))| dy \sum_{k=j+2}^{\infty} r_k^{\alpha}  \\
& \le \int_{\Omega_0} |\mathcal{A}(\nabla u (x))| dx \sum_{j=1}^{\infty} r_j^{\alpha+\beta-\sigma}   + C(\alpha) \sum_{j=1}^{\infty} r_j^{\alpha+\beta-\sigma} \int_{\Omega_{j}}  |\mathcal{A}(\nabla u (y))| dy  \\
& \le C(\alpha) \int_{\Omega_0} |\mathcal{A}(\nabla u (x))| dx  \sum_{j=1}^{\infty} r_j^{\alpha+\beta-\sigma},
\end{align*}
and similarly, it yields
\begin{align*}
(\mathbb{I})_{12} & = \sum_{j-k \ge 2} \left( \frac{r_k^{\alpha} r_j^{\beta}}{(r_k+r_j)^{\sigma}} \int_{\Omega_{k}}  |\mathcal{A}(\nabla u (x))| dx + \frac{r_j^{\alpha} r_k^{\beta}}{(r_k+r_j)^{\sigma}} \int_{\Omega_{j}}  |\mathcal{A}(\nabla u (y))| dy\right) \\
& \le C(\beta) \int_{\Omega_0} |\mathcal{A}(\nabla u (x))| dx \sum_{j=1}^{\infty} r_j^{\alpha+\beta-\sigma},
\end{align*}
which can be substituted into~\eqref{est:I31-a} to reduce
\begin{align}\label{est:I31}
(\mathbb{I})_1  & \le C(n,\sigma,\alpha,\beta) \int_{\Omega_{0}} |\mathcal{A}(\nabla u(x))| dx \sum_{j=1}^{\infty} r_j^{\alpha+\beta-\sigma}.
\end{align}
To deal with the third term $(\mathbb{I})_3$, we first notice that $\Omega_k$ can be covered by $N_k \sim \frac{|\partial \Omega|}{r_k}$ balls with radius $r_k$ and centered at $z^k_{l} \in \Omega_k$, $l = \overline{1,N_k}$, that means
\begin{align*}
\Omega_k \subset \bigcup_{l=1}^{N_k} B_{r_k}(z^{k}_{l}) = \bigcup_{z^{k}_{l} \in Q_k} B_{r_k}(z^{k}_{l}),
\end{align*}
where $Q_k := \left\{z^{k}_{l} \in \Omega_k: \ l \in \{1,2,3,...,N_k\}\right\}$. By the geometric feature of each set $Q_k$, we can decompose the integral in $\Omega_k \times \Omega_k$ as follows
\begin{align}\nonumber
\int_{\Omega_k} \int_{\Omega_k} \mathbb{T}(x,y) dx dy & \le \sum_{z^{k}_i, z^{k}_j \in Q_k} \int_{B_{r_k}(z^{k}_i)} \int_{B_{r_k}(z^{k}_j)} \mathbb{T}(x,y) dx dy \\ \nonumber
& \le \sum_{z^{k}_i \in Q_k} \sum_{z^{k}_j \in Q_{k, z^{k}_i}}  \int_{B_{r_k}(z^{k}_i)} \int_{B_{r_k}(z^{k}_j)} \mathbb{T}(x,y) dx dy \\ \label{est:Om-k}
& \quad  + \sum_{z^{k}_i \in Q_k} \sum_{z^{k}_j \in Q_k \setminus Q_{k, z^{k}_i}} \int_{B_{r_k}(z^{k}_i)} \int_{B_{r_k}(z^{k}_j)} \mathbb{T}(x,y) dx dy,
\end{align}
where $Q_{k, z^{k}_i}$ contains the centers that are closed to $z^{k}_i$, it indicates that
\begin{align*}
Q_{k, z^{k}_i} := \left\{z^{k}_l \in Q_k: \  B_{3r_k/2}(z^{k}_l) \cap B_{3r_k/2}(z^{k}_i) \neq \emptyset\right\}.
\end{align*}
The nice feature here is that, the cardinality of $Q_{k, z^{k}_i}$ is finite and depends only on $n$ and $R_0$, i.e. there exists $C(n,R_0)$ such that $|Q_{k, z^{k}_i}| \le C(n,R_0)$. Moreover, it is easily for us to check that
\begin{align*}
B_{r_k}(z^{k}_j) \subset B_{4r_k}(z^{k}_i), \quad \mbox{ for all } \ z^{k}_j \in  Q_{k, z^{k}_i}.
\end{align*}   
Therefore, we are able to estimate the first term on the right-hand side of~\eqref{est:Om-k} as
\begin{align}\label{est:Qk-1}
 \sum_{z^{k}_i \in Q_k} \sum_{z^{k}_j \in Q_{k, z^{k}_i}} &  \int_{B_{r_k}(z^{k}_i)} \int_{B_{r_k}(z^{k}_j)} \mathbb{T}(x,y) dx dy \notag \\
 & \le C(n,R_0) \sum_{z^{k}_i \in Q_k} \int_{B_{4r_k}(z^{k}_i)} \int_{B_{4r_k}(z^{k}_i)} \mathbb{T}(x,y) dx dy.
\end{align}
Applying~\eqref{eq:diff_est_local} in Theorem~\ref{theo:Avelin}, it enables us to obtain
\begin{align}\nonumber
\int_{B_{4r_k}(z^{k}_i)} & \int_{B_{4r_k}(z^{k}_i)} \mathbb{T}(x,y) dx dy \notag \\
& \le r_k^{\alpha+\beta} \int_{B_{4r_k}(z^{k}_i)} \int_{B_{4r_k}(z^{k}_i)}  \frac{|\mathcal{A}(\nabla u(x)) - \mathcal{A}(\nabla u(y))|}{|x-y|^{n+\sigma}} dx dy \notag \\ \label{est:Qk-2}
& \le C(n,p,c_\mathcal{A},\sigma)r_k^{\alpha+\beta-\sigma} \left( \int_{B_{8r_k}(z^{k}_i)} |\mathcal{A}(\nabla u(x))| dx +  r_k \left[|\mu|(B_{8r_k})\right] \right).
\end{align}
Combining between~\eqref{est:Qk-1} and~\eqref{est:Qk-2} together, one gets
\begin{align}\nonumber
& \sum_{z^{k}_i \in Q_k} \sum_{z^{k}_j \in Q_{k, z^{k}_i}}  \int_{B_{r_k}(z^{k}_i)} \int_{B_{r_k}(z^{k}_j)} \mathbb{T}(x,y) dx dy  \\ \label{est:Qk-3}
& \le C(n,p,c_\mathcal{A},\sigma,R_0) r_k^{\alpha+\beta-\sigma}   \left(\sum_{z^{k}_i \in Q_k} \int_{B_{8r_k}(z^{k}_i)} |\mathcal{A}(\nabla u(x))| dx +  r_k \sum_{z^{k}_i \in Q_k} \left[|\mu|(B_{8r_k})\right] \right). 
\end{align}
In the course of the proof, we note that there is a constant $C=C(n)>0$ such that
\begin{align*}
\sum_{z^{k}_i \in Q_k} \chi_{B_{8r_k}(z^{k}_i)}(\xi) \le C \chi_{\Omega_0}(\xi), \quad \forall \xi \in \Omega,
\end{align*}
then for any $f \in L_{\mathrm{loc}}^1(\mathbb{R}^n)$, there holds
\begin{align} \label{eq:property}
\sum_{z^{k}_i \in Q_k} \int_{B_{8r_k}(z^{k}_i)} f(\xi) d\xi  = \sum_{z^{k}_i \in Q_k} \int_{\mathbb{R}^n}\chi_{B_{8r_k}(z^{k}_i)}(\xi) f(\xi) d\xi 
  \le C \int_{\Omega_0} f(\xi) d\xi.
\end{align}
Applying~\eqref{eq:property} to~\eqref{est:Qk-3}, one concludes that
\begin{align}\nonumber
\sum_{z^{k}_i \in Q_k} & \sum_{z^{k}_j \in Q_{k, z^{k}_i}}  \int_{B_{r_k}(z^{k}_i)} \int_{B_{r_k}(z^{k}_j)} \mathbb{T}(x,y) dx dy  \\ \label{est:Qk}
& \le C(n,p,c_\mathcal{A},\sigma,R_0) r_k^{\alpha+\beta-\sigma}   \left(\int_{\Omega_0} |\mathcal{A}(\nabla u(x))| dx +  r_k  \left[|\mu|(\Omega_0)\right] \right). 
\end{align}
On the other hand, for any $x \in B_{r_k}(z^{k}_i)$ and $y \in B_{r_k}(z^{k}_j)$ with $z^{k}_i \in Q_k$, $z^{k}_j \in  Q_k \setminus Q_{k, z^{k}_i}$, we have $|x - y| \ge  r_k$. Applying a similar argument as in the previous inequality~\eqref{est:basic}, one also has
\begin{align*}
\sum_{z^{k}_j \in Q_k \setminus Q_{k, z^{k}_i}} & \int_{B_{r_k}(z^{k}_i)} \int_{B_{r_k}(z^{k}_j)} d^{\alpha}(x) d^{\beta}(y) \frac{|\mathcal{A}(\nabla u(x))|}{|x-y|^{n+\sigma}} dx dy \\
 & \le r_k^{\alpha+\beta}  \int_{B_{r_k}(z^{k}_i)} \left(\sum_{z^{k}_j \in Q_k \setminus Q_{k, z^{k}_i}} \int_{B_{r_k}(z^{k}_j)}  \frac{1}{|x-y|^{n+\sigma}} dy \right) |\mathcal{A}(\nabla u(x))| dx  \\
 & \le r_k^{\alpha+\beta-\sigma}  \int_{B_{r_k}(z^{k}_i)}   \left(\int_{\{|\xi|\ge 1\}}  \frac{1}{|\xi|^{n+\sigma}} d\xi \right) |\mathcal{A}(\nabla u(x))| dx  \\
 & \le C(n,\sigma) r_k^{\alpha+\beta-\sigma}\int_{B_{r_k}(z^{k}_i)} |\mathcal{A}(\nabla u(x))| dx.
\end{align*}
Taking into account the above inequality, we may estimate the last term in~\eqref{est:Om-k} as 
\begin{align}\nonumber
 \sum_{z^{k}_i \in Q_k} \sum_{z^{k}_j \in Q_k \setminus Q_{k, z^{k}_i}} &  \int_{B_{r_k}(z^{k}_i)} \int_{B_{r_k}(z^{k}_j)} \mathbb{T}(x,y) dx dy \\ \nonumber
& \le C(n,\sigma){r_k^{\alpha+\beta-\sigma}} \sum_{z^{k}_i \in Q_k} \int_{B_{r_k}(z^{k}_i)} |\mathcal{A}(\nabla u(x))| dx \\ \label{est:Qk-c}
& \le C(n,\sigma,R_0){r_k^{\alpha+\beta-\sigma}} \int_{\Omega_0} |\mathcal{A}(\nabla u(x))| dx.
\end{align}
Substituting~\eqref{est:Qk} and~\eqref{est:Qk-c} into~\eqref{est:Om-k}, one gets that
\begin{align} \nonumber
(\mathbb{I})_3 & = \sum_{k=1}^{\infty} \int_{\Omega_{k}}\int_{\Omega_k} \mathbb{T}(x,y) dx dy \\  \nonumber
& \le C(n,p,c_\mathcal{A},\sigma) \left( \int_{\Omega_0} |\mathcal{A}(\nabla u(x))| dx \sum_{k=1}^{\infty} r_k^{\alpha+\beta-\sigma}  +   |\mu|(\Omega_0)\sum_{k=1}^{\infty} r_k^{\alpha+\beta-\sigma+1}  \right) \\ \label{est:I33}
& \le C(n,p,c_\mathcal{A},\sigma,R_0) \left( \int_{\Omega_0} |\mathcal{A}(\nabla u(x))| dx    +   |\mu|(\Omega_0)  \right) \sum_{k=1}^{\infty} r_k^{\alpha+\beta-\sigma}. 
\end{align}
We next estimate the last term $(\mathbb{I})_2$ with notice that 
\begin{align}\nonumber
(\mathbb{I})_2 = \sum_{|k-j| = 1} \int_{\Omega_{k}}\int_{\Omega_j} \mathbb{T}(x,y) dx dy & = 2 \sum_{k = 1}^{\infty} \int_{\Omega_{k}}\int_{\Omega_{k+1}} \mathbb{T}(x,y) dx dy \\ \nonumber
& \le 2 \sum_{k = 1}^{\infty} \int_{P_{k}}\int_{P_{k}} \mathbb{T}(x,y) dx dy,
\end{align}
where the new set $P_k$ is defined by
\begin{align*}
P_k := \Omega_k \cup \Omega_{k+1} = \left\{x \in \Omega: \ \frac{r_k}{4} < d(x) \le r_k\right\}.
\end{align*}
In a similar fashion, for $(\mathbb{I})_3$ we may decompose $P_k$ by the same the method to $\Omega_k$ in~\eqref{est:Om-k} and preform the same computation to observe that
\begin{align} \label{est:I32}
(\mathbb{I})_2 \le C(n,p,c_\mathcal{A},\sigma,R_0) \left( \int_{\Omega_0} |\mathcal{A}(\nabla u(x))| dx + |\mu|(\Omega_0) \right) \sum_{k=1}^{\infty} r_k^{\alpha+\beta-\sigma}.
\end{align}
Collecting~\eqref{est:I3-a},~\eqref{est:I31},~\eqref{est:I33} and~\eqref{est:I32}, one can conclude that
\begin{align}\label{est:I3}
(\mathbb{I}) \le C(n,p,c_\mathcal{A},\sigma,\alpha,\beta,R_0)  \left( \int_{\Omega} |\mathcal{A}(\nabla u(x))| dx + |\mu|(\Omega)  \right) \sum_{k=1}^{\infty} r_k^{\alpha+\beta-\sigma}.
\end{align}
Finally, the assumption $\alpha+\beta>\sigma$ allows us to find 
\begin{align*}
\sum_{k=1}^{\infty} r_k^{\alpha+\beta-\sigma} = C R_0^{\alpha+\beta-\sigma}, \quad \mbox{with} \ C = \sum_{k=1}^{\infty} \left(\frac{1}{2}\right)^{(\alpha+\beta-\sigma)k} < \infty,
\end{align*}
which leads to the desired result~\eqref{eq:est-main} from~\eqref{est:I3}.
\end{proof}

\begin{proof}[Proof of Theorem~\ref{theo:main-B}]
For every $\xi, \zeta \in \mathbb{R}^n$, let us recall two following elementary inequalities
\begin{align}\label{est:B-01}
\big|\mathbb{E}(\xi)-\mathbb{E}(\zeta)\big|^{\frac{1}{\gamma}-1} & = \big|\left(|\xi| + \kappa\right)^{\gamma p - \gamma-1} \xi - \left(|\zeta| + \kappa\right)^{\gamma p - \gamma-1} \zeta\big|^{\frac{1}{\gamma}-1} \notag \\
& \le \left(\left(|\xi| + \kappa\right)^{\gamma p - \gamma} + \left(|\zeta| + \kappa\right)^{\gamma p - \gamma}\right)^{\frac{1}{\gamma}-1} \notag \\ 
& \le C(p,\gamma) \left(|\xi| + |\zeta| + \kappa\right)^{(p-1)(1-\gamma)},
\end{align}
and 
\begin{align}\label{est:B-02}
\big|\mathbb{E}(\xi)-\mathbb{E}(\zeta)\big|^{\frac{1}{\gamma}} & = \big|\left(|\xi| + \kappa\right)^{\gamma p - \gamma-1} \xi - \left(|\zeta| + \kappa\right)^{\gamma p - \gamma-1} \zeta\big| \notag \\
& \le C(p,\gamma) \left(|\xi| + |\zeta| + \kappa\right)^{\gamma p - \gamma-1} |\xi - \zeta|.
\end{align}
Combining~\eqref{est:B-01} and~\eqref{est:B-02}, it leads to
\begin{align*}
|\mathbb{E}(\xi)-\mathbb{E}(\zeta)|^{\frac{1}{\gamma}} & \le C(p,\gamma) \left(|\xi| + |\zeta|+ \kappa\right)^{p-2} |\xi - \zeta|,
\end{align*}
and together with assumption~\eqref{eq:cond-ellipgrow}, it allows us to arrive
\begin{align}\label{est:B-2}
|\mathbb{E}(\xi)-\mathbb{E}(\zeta)|^{\frac{1}{\gamma}} & \le C(p,\gamma) |\mathcal{A}(\xi) - \mathcal{A}(\zeta)|.
\end{align}
On the other hand, thanks to~\eqref{est:B-2} and Theorem~\ref{theo:main}, one has 
\begin{align*}
[\mathbb{E}(\nabla u)]_{W_G^{\gamma\sigma,\frac{1}{\gamma}}(\Omega;\alpha,\beta)} & = \left[\int_{\Omega} \int_{\Omega} d^{\alpha}(x) d^{\beta}(y) \frac{|\mathbb{E}(\nabla u(x))-\mathbb{E}(\nabla u(y))|^{\frac{1}{\gamma}}}{|x-y|^{n+\sigma}} dx dy\right]^{\gamma} \notag \\
& \le C(p,\gamma)\left[\int_{\Omega} \int_{\Omega} d^{\alpha}(x) d^{\beta}(y) \frac{|\mathcal{A}(\nabla u(x)) - \mathcal{A}(\nabla u(y))|}{|x-y|^{n+\sigma}} dx dy\right]^{\gamma} \notag \\
& \le C(c_{\mathcal{A}},\sigma,n,p,\alpha,\beta,\gamma) \left(\int_{\Omega} |\mathcal{A}(\nabla u(x))| dx + |\mu|(\Omega)\right)^{\gamma},
\end{align*}
which completes the proof.
\end{proof}


\begin{thebibliography}{99}
\footnotesize

\bibitem{AKM18} B. Avelin, T. Kuusi, G. Mingione, {\em Nonlinear Calder{\'o}n-Zygmund theory in the limiting case}, Arch. Rational. Mech. Anal., {\bf 227} (2018), 663--714.

\bibitem{BDW2020} A. Kh. Balci, L. Diening, M. Weimar, {\em Higher order Calder{\'o}n-Zygmund estimates for the $p$-Laplace equation}, J. Differential Equations, {\bf 268} (2020), 590--635.

\bibitem{BBGGPV1995} P. Benilan P., L. Boccardo, T. Gallou\"et, R. Gariepy R., M. Pierre, J. L. Vazquez J. L., {\em An $L^1$-theory of existence and uniqueness of solutions of nonlinear elliptic equations}, Ann. Scuola Norm. Sup. Pisa Cl. Sci. (IV), {\bf 22} (1995), 241--273.

\bibitem{BCDK2018} D. Breit, A. Cianchi, L. Diening, T. Kuusi and S. Schwarzacher, {\em Pointwise Calderon-Zygmund gradient estimates for the $p$-Laplace system}, J. Math. Pures Appl. (9), {\bf 114} (2018), 146--190.

\bibitem{BG1989} L. Boccardo, T. Gallou\"et, {\em Nonlinear elliptic and parabolic equations involving measure data}, J. Funct. Anal., {\bf 87} (1989), 149--169.

\bibitem{BG1992} L. Boccardo, T. Gallou\"et, {\em Nonlinear elliptic equations with right-hand side measures}, Comm. Partial Differential Equations, {\bf 17} (1992), 641--655.

\bibitem{CM2014} A Cianchi, VG Maz'ya, {\em Gradient regularity via rearrangements for $p$-Laplacian type elliptic boundary value problems}, J. Eur. Math. Soc. (JEMS), {\bf 16}(3) (2014), 571--595.

\bibitem{Dall1996} A. Dall'Aglio, {\em Approximated solutions of equations with $L^1$-data. Application to the $H$-convergence of quasi-linear parabolic equations}, Ann. Mat. Pura Appl., {\bf 170}(4) (1996), 207--240.

\bibitem{Maso1997} G. Dal Maso, F. Murat, L. Orsina, A. Prignet, {\em Definition and existence of renormalized solutions of elliptic equations with general measure data}, Comptes Rendus de l’Acad\'emie Des Sciences - Series I - Mathematics, {\bf 325}(5) (1997), 481--486.

\bibitem{Maso1999} G. Dal Maso, F. Murat, L. Orsina, A. Prignet, {\em Renormalized solutions for elliptic equations with general measure data}, Ann. Sc. Norm. Super Pisa Cl. Sci., {\bf 28} (1999), 741--808.

\bibitem{DPV12} E. Di Nezza, G. Palatucci, E. Valdinoci, {\em Hitchhiker’s guide to the fractional Sobolev spaces}, Bull. Sci. Math., {\bf 136} (2012), 521--573.


\bibitem{DHM1997} G. Dolzmann G. N. Hungerb\"uhler,  S. M\"uller, {\em Non-linear elliptic systems with measure valued right hand side}, Math. Z., {\bf 226} (1997), 545--574.

\bibitem{DM2010} F. Duzaar, G. Mingione, {\em Local Lipschitz regularity for degenerate elliptic systems}, Ann. Inst. H. Poincar{\'e} Anal. Non Lin{\'e}aire, {\bf 27}(6) (2010), 1361--1396.

\bibitem{HH2017} P. Harjulehto, P. H\"ast\"o, {\em Riesz potential in generalized Orlicz spaces}, Forum Math., {\bf 29} (2017), 229--244.

\bibitem{KM2018} T. Kuusi, G. Mingione, {\em Vectorial nonlinear potential theory}, J. Europ. Math. Soc. (JEMS), {\bf 20} (2018), 929--1004.

\bibitem{Min2007} G. Mingione, {\em The Calder\'on-Zygmund theory for elliptic problems with measure data}, Ann Scu. Norm. Sup. Pisa Cl. Sci. (V), {\bf 6} (2007), 195--261.

\bibitem{HP2019} Q.-H. Nguyen, N.-C. Phuc, {\em Good-$\lambda$ and Muckenhoupt-Wheeden type bounds in  quasilinear measure datum problems, with applications}, Math. Ann., {\bf 374}(1-2) (2019), 67--98.

\bibitem{Serrin}  J. Serrin, {\em Pathological solutions of elliptic differential equations}, Ann. Scu. Norm. Sup. Pisa Cl. Sci., {\bf 18} (1964), 385--387.

\bibitem{MPT2018} M.-P. Tran, {\em Good-$\lambda$ type bounds of quasilinear elliptic equations for the singular case},  Nonlinear Anal., {\bf 178} (2019), 266--281.

\bibitem{MPT19} M.-P. Tran, T.-N. Nguyen, {\em Global gradient estimates for very singular nonlinear elliptic equations with measure data}, preprint, arXiv:1909.06991, 39 pp.

\bibitem{PNCCM} M.-P. Tran, T.-N. Nguyen, {\em Lorentz-Morrey global bounds for singular quasilinear elliptic equations with measure data}, Commun. Contemp. Math., {\bf 22}(5) (2020), 1950033, 30 pp.

\bibitem{PNJDE} M.-P. Tran, T.-N. Nguyen, {\em New gradient estimates for solutions to quasilinear divergence form elliptic equations with general Dirichlet boundary data}, J. Differential Equations, {\bf 268}(4) (2020), 1427--1462.

\bibitem{Veron} L. V\'eron, {\em Elliptic equations involving measures}, Stationary Partial Differential Equations, vol. I, Handb. Differ. Equ., North-Holland, Amsterdam (2004), 593--712.
\end{thebibliography}
\end{document}